\documentclass[reqno]{amsart}
\usepackage{amssymb}
\usepackage{mathtools}

\newtheorem{theorem}{Theorem}[section]
\newtheorem{lemma}[theorem]{Lemma}

\newtheorem{prop}[theorem]{Proposition}
\newtheorem{claim}[theorem]{Claim}
\theoremstyle{definition}
\newtheorem{definition}[theorem]{Definition}

\theoremstyle{remark}

\newenvironment{claimproof}[1]{\par\noindent\underline{Proof:}\space#1}{\hfill $\blacksquare$}

\usepackage{tikz-cd}

\tikzset{
  symbol/.style={
    draw=none,
    every to/.append style={
      edge node={node [sloped, allow upside down, auto=false]{$#1$}}}
  }
}

\newcommand{\dontprint}[1]\relax

\newcommand{\Sing}{\operatorname{Sing}}

\renewcommand{\P}{{\mathbb P}}
\newcommand{\A}{{\mathbb A}}

\newcommand{\OO}{{\mathcal O}}
\newcommand{\PP}{{\mathcal P}}

\newcommand{\sub}{\subset}

\newcommand{\ov}{\overline}

\newcommand{\lan}{\langle}
\newcommand{\ran}{\rangle}

\newcommand{\rk}{{\operatorname{rk}}}

\renewcommand{\k}{\mathbf{k}}

\usepackage{xcolor}
\usepackage{amscd}

\title{Linear subspaces in cubic hypersurfaces}

\author{Alexander Polishchuk}
\thanks{A.P. is partially supported by the NSF grant DMS-2001224, 
%by the National Center of Competence in Research ``SwissMAP --- The Mathematics of Physics'' of the Swiss National Science Foundation, 
and within the framework of the HSE University Basic Research Program and by the Russian Academic Excellence Project `5-100'.}
\author{Chen Wang}

\address{Department of Mathematics, 
    University of Oregon, 
    Eugene, OR 97403, USA}
\email{}
\address{
    Department of Mathematics, 
    University of Oregon, 
    Eugene, OR 97403, USA; National Research University Higher School of Economics; and Korea Institute for 
    Advanced Study 
  }
  \email{apolish@uoregon.edu}

%\numberwithin{equation}{section}
\begin{document}

\begin{abstract}
We prove that for any cubic polynomial of slice rank $r$, the intersection of all linear subspaces of minimal codimension contained in the corresponding hypersurface
has codimension $\le r^2+\frac{(r+1)^2}{4}+r$ in the affine space. This is deduced from the following result of independent interest. Consider the intersection $I$ of linear ideals $(P_i)$
in $\k[x_1,\ldots,x_n]$, with $\dim P_i\le r$. Then the number of quadratic generators of $I$ is $\le r^2$. 
\end{abstract}

\maketitle

\section{Introduction}

Let $f(x_1,\ldots,x_n)$ be a homogeneous cubic polynomial over a field $\k$. 
Let $\rk(f)$ denote the slice rank of $f$, i.e., the minimal number $r$ such that $f$ belongs to the ideal $(\ell_1,\ldots,\ell_r)$
generated by linear forms. Equivalently, this is the minimal codimension in $\A^n$ of a linear subspace contained in the hypersurface
$f=0$. Let $\PP_f$ denote the set of all subspaces of linear forms $P$ of dimension $\rk(f)$ such that $f\in P$, and let 
$$L_f:=\sum_{P\in \PP_f}P.$$
Geometrically, $L_f$ is the orthogonal to the intersection of all linear subspaces of minimal codimension in the hypersurface $f=0$.
Thus, $\dim L_f$ is the codimension of this intersection in $\A^n$.

In \cite{KP} it was proved that $\dim L_f$ is bounded by a universal polynomial of degree $4$ in $\rk(f)$ (which does not depend on $n$).
Our main result is the following improvement of this bound.

\medskip

\noindent
{\bf Theorem A}. {\it Let $f$ be a cubic of slice rank $r$. Then $\dim L_f\le r^2+\frac{(r+1)^2}{4}+r$.}

\medskip

We deduce this from another result, possibly of independent interest.
For a homogeneous ideal $I\sub S=\k[x_1,\ldots,x_n]$, let us define the number of generators of degree $d$ as $\dim I_d/(S_1\cdot I_{d-1})$.

\medskip

\noindent
{\bf Theorem B}. {\it Let $(P_s)_{s\in S}$ be any collection of subspaces of linear forms, such that $\dim P_s\le r$. Consider the intersection of
the corresponding linear ideals $I=\cap_{s\in S} (P_s)$. Then $I$ has $\le r^2$ generators of degree $2$.}

\medskip

Note that the bound in Theorem A is not far from optimal: we show that the minimal $\dim L_f$ grows quadratically with $r$. 
%More generally, one can define the space $L_f$ for homogeneous polynomials of any degree, and ask for estimates for
%$$\CC(r,d):=\max_{\deg f=d} \dim L_f.$$
More precisely, let us set
$$c(r):=\max_{\deg f=3} \dim L_f.$$
%It was conjectured in \cite{KP} that $\CC(r,d)<\infty$ for all $r$ and $d$. This is known to be true for $d\le 3$ and any $r$, and for $r=2$ and any $d$ (more
%precisely, one has $\CC(2,d)\le d^2+1$).
%We compute $\dim L_f$ in some examples, which give the following lower bound for $\CC(r,d)$???
Then Theorem A gives a quadratic upper bound on $c(r)$. We find examples of cubics (see Proposition \ref{fn-prop}) that give the following lower bound:
$$c(r)\ge {r+1\choose 2}+r.$$

It would be interesting to determine the exact values of $c(r)$ for small $r$. It is easy to see that $c(1)=3$, and in \cite{KP} it was shown that $c(2)=6$. Here we prove (see Theorem \ref{rk3-thm}) that
$$10\le c(3)\le 12.$$

\section{Intersections of linear ideals}

\subsection{Bound on quadratic generators} We set $S=\k[x_1,\ldots,x_n]$.

\begin{theorem}\label{main-trivial-intersect}
Let $P_1,...,P_s$ be a collection of subspaces of linear forms such that $\dim P_i\le r$ and
 $P_1\cap \cdots \cap P_s=0$.
Consider the ideal $I = (P_1)\cap \cdots \cap (P_s) $. Then $\dim I_2\leq r^2$.
\end{theorem}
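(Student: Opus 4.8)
The plan is to dualize the statement into a bound on symmetric powers, and then prove that bound by an explicit filtration of $V$.

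First I would record the degree-two part of a linear ideal: $(P_i)_2=P_i\cdot S_1=\ker\big(S_2\to\operatorname{Sym}^2(S_1/P_i)\big)$, so $I_2=\bigcap_i(P_i\cdot S_1)$. Put $V:=S_1^{*}$ and $U_i:=P_i^{\perp}\sub V$, so that $\codim_V U_i=\dim P_i\le r$ and the hypothesis $P_1\cap\cdots\cap P_s=0$ becomes $U_1+\cdots+U_s=V$. The annihilator of $P_i\cdot S_1$ in the dual of $S_2$ is $\operatorname{Sym}^2 U_i\sub \operatorname{Sym}^2 V$, so by the identity $\bigcap_i A_i=\big(\sum_i A_i^{\perp}\big)^{\perp}$ we get
$$\dim I_2=\dim S_2-\dim\Big(\textstyle\sum_i\operatorname{Sym}^2 U_i\Big).$$
Thus it suffices to show $\codim_{\operatorname{Sym}^2 V}\big(\sum_i\operatorname{Sym}^2 U_i\big)\le r^2$ whenever the $U_i$ span $V$ and have codimension $\le r$. (In characteristic $2$ one uses the divided power $\Gamma^2$ in place of $\operatorname{Sym}^2$ on the dual side; the combinatorics below are unaffected.)

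Since enlarging the family only increases $\sum_i\operatorname{Sym}^2 U_i$, I may pass to a finite spanning subfamily $U_1,\dots,U_m$. Set $V_l:=U_1+\cdots+U_l$ ($V_0=0$, $V_m=V$) and choose $C_l\sub U_l$ with $V_l=V_{l-1}\oplus C_l$; then $V=\bigoplus_l C_l$, each $U_l\sub V_l=C_1\oplus\cdots\oplus C_l$, and $C_l\sub U_l$. Correspondingly $\operatorname{Sym}^2 V$ is the direct sum of the diagonal blocks $\operatorname{Sym}^2 C_l$ and the off-diagonal blocks $C_k\ot C_l$ ($k<l$), the latter grouping into $\bigoplus_{l}V_{l-1}\ot C_l$. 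The diagonal is harmless: $C_l\sub U_l$ gives $\operatorname{Sym}^2 C_l\sub\operatorname{Sym}^2 U_l$. For the block indexed by $l$, let $G_l:=\pr_{V_{l-1}}(U_l)$ be the projection of $U_l$ along $C_l$; for $g\in G_l$ pick $u\in U_l$ with $\pr_{V_{l-1}}(u)=g$, so that $u=g+\pr_{C_l}(u)$, and for any $c\in C_l\sub U_l$ both $u\cdot c$ and $\pr_{C_l}(u)\cdot c$ lie in $\operatorname{Sym}^2 U_l$; hence $g\cdot c\in\operatorname{Sym}^2 U_l$. Thus $G_l\ot C_l$ is contained in $\sum_i\operatorname{Sym}^2 U_i$, and the $l$-th block contributes to the cokernel at most $(V_{l-1}/G_l)\ot C_l$, of dimension $e_l c_l$, where $c_l=\dim C_l$ and $e_l=\codim_{V_{l-1}}G_l=\codim_{V_l}U_l$.

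Summing the blocks gives $\codim\big(\sum_i\operatorname{Sym}^2 U_i\big)\le\sum_l e_l c_l$. Here $e_l=\codim_{V_l}U_l\le\codim_V U_l\le r$, while (since $V_1=U_1$, so $C_1=U_1$ produces no off-diagonal block) $\sum_{l\ge2}c_l=\dim V-\dim U_1=\codim_V U_1\le r$. Therefore $\sum_l e_l c_l\le r\sum_{l\ge2}c_l\le r\cdot\codim_V U_1\le r^2$, which is the required bound. I expect the main obstacle to be the setup in the previous paragraph: realizing that the complements $C_l$ should be taken \emph{inside} $U_l$ (making every diagonal block automatically free) and that it is the projected subspaces $G_l$, not the $U_l$ themselves, that control the surviving off-diagonal part. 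Once the flag is arranged this way, both the key inclusion $g\cdot c\in\operatorname{Sym}^2 U_l$ and the final arithmetic are routine; the only other point needing care is the characteristic-$2$ bookkeeping in the duality step.
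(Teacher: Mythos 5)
Your proof is correct, but it takes a genuinely different route from the paper. The paper argues by induction on $s$: the base case uses $(P_1)\cap(P_2)=(P_1)\cdot(P_2)$, and the inductive step writes $Q=P_{1\ldots s}\oplus C$ with $P_{1\ldots s}=\cap_{i\le s}P_i$, splits the generators of $(P_1)\cap\cdots\cap(P_s)$ into linear ones from $P_{1\ldots s}$ and at most $(r-p)^2$ quadratic ones in $S(C)$, and then shows via the identity $((C')\cdot(C')+(P_{s+1}))\cap(P_{1\ldots s})=(P_{s+1})\cdot(P_{1\ldots s})$ that $\dim I_2\cap(P_{s+1})\le pr+(r-p)^2\le r^2$. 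You instead dualize: with $U_i=P_i^{\perp}\sub V=S_1^*$ the problem becomes $\codim\bigl(\sum_i\operatorname{Sym}^2U_i\bigr)\le r^2$ (divided power $\Gamma^2$ in characteristic $2$), which you prove by choosing a flag $V_l=U_1+\cdots+U_l$ with complements $C_l\sub U_l$, observing that every diagonal block $\operatorname{Sym}^2C_l$ and every $G_l\ot C_l$ (with $G_l=U_l\cap V_{l-1}$) is absorbed, and bounding the leftover by $\sum_{l\ge2}e_lc_l\le r\cdot\codim_V U_1\le r^2$. I checked the key points: the identification of $(P_i\cdot S_1)^{\perp}$ with $\operatorname{Sym}^2U_i$, the equality $e_l=\codim_{V_l}U_l$, and the vanishing of the $l=1$ off-diagonal block are all fine, and the argument is uniform in characteristic. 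Your version is pure linear algebra, needs no appeal to \cite[Lemma 3.2]{KP}, and handles the infinite families of Theorem B without a separate reduction (adding more $U_i$ only shrinks the codimension); the paper's inductive version has the advantage of exhibiting explicit quadratic generators and the finer intermediate bound $\dim I_2\cap(P_{s+1})\le pr+(r-p)^2$, which is in the spirit of the generator-by-generator bookkeeping used later in the rank-$3$ case analysis.
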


\begin{proof}
We show this by induction on the number of spaces $s\ge 2$. In the base case $s=2$ we have $(P_1)\cap (P_2)=(P_1)\cdot (P_2)$, which is generated by $\le r^2$ products
of linear generators of $P_1$ and $P_2$.

Now assume the assertion holds for some $s$. Suppose
%Let $d(s,r) = \dim_k I_2$, and assume $d(s,r)\leq r^2$. We now consider the case when 
we have subspaces of linear forms $P_1,...,P_s,P_{s+1}$ with $P_1\cap \cdots \cap P_{s+1}=0$ and $\dim P_i\le r$. 
Set $P_{1\ldots s}:= P_1\cap \cdots \cap P_{s}$, which is not necessarily trivial, say $P_{1\ldots s} = \langle \ell_1,\ldots,\ell_p\rangle$ with $p = \dim P_{1\ldots s}$. Also,
set $Q:= P_1+\ldots +P_s$, and pick a subspace $C\subset Q$ to be the complement of $P_{1,...,s}$ in $Q$ such that $Q\cap P_{s+1}\subset C$ (we can do this since $P_{1,\ldots,s}\cap P_{s+1}=0$. 

Let $I:=(P_1)\cap\cdots\cap(P_s)$. By \cite[Lemma 3.2]{KP}, we have $I=S I'$, where $I'\sub S(Q)$ is the intersection of the ideals in $S(Q)$ generated by $P_1,\ldots,P_s$.
Consider the following diagram:
$$\begin{tikzcd}[remember picture] 
I'=\pi^{-1}(\ov{I})\arrow[r,symbol=\subset]\arrow[d,"\pi"]& S(Q) \arrow[r,symbol=\supset]\arrow[d,"\pi"] & S(C) \arrow[ld,"\cong"]\\
\ov{I} \arrow[r,symbol=\subset] & S(Q/P_{1\ldots s}) &
\end{tikzcd}$$
where $\pi:S(Q)\to S(Q/(P_{1\ldots s}))$ is the projection, and $\ov{I}=(P/P_{1\ldots s})\cap\ldots \cap (P/P_{1\ldots s})\sub S(Q/P_{1\ldots s})$.
Since $I'=\pi^{-1}(\ov{I})$, it is generated by $\ell_1,\ldots,\ell_p$, together with lifts of generators of $\ov{I}$, which we can choose to be in $S(C)$.
Hence, $I$ has linear generators $\ell_1,\ldots,\ell_p$ and quadratic generators $q_1,\ldots,q_d\in S(C)$. Note that since $\dim P_i/P_{1\ldots s}\le r-p$, by the induction assumption
we have
$$d\le (r-p)^2.$$
% and since $S^2(C)$ maps isomorphically to the degree $2$ component of $S/(P_{1\ldots s})$,  
%Then for any quadratic $q\in I_2$, $q\in (l_1,...,l_p; q_1,...,q_d)$.
%we have 
%$$I'_2\sub (P_{1\ldots s})+I'\cap S^2(C).$$ 
%Set $d:=\dim I\cap S^2(C)$.
%\begin{claim}
% $d = d(s,r-p)$.
%\end{claim} 
%\begin{claimproof}
%We have actually $\pi^{-1}(\Bar{I}) = I$ by flatness of $\pi$. Then by the rightmost isomorphism above, quadratics in $S(C)$ are in one to one correspondence with quadratics in $S(Q/P_{1,...,s})$, the latter having cardinality exactly $d(s,r-p)$.
%\end{claimproof}

Thus, any element in $I_2$ can be written as
$$q=\sum_{i=1}^p m_i\ell_i+\sum_{j=1}^d c_j q_j,$$
where $m_i$ are linear forms and $c_j$ are constants.
Suppose in addition $q\in (P_{s+1})$. Then
$$\sum_{i=1}^p m_i\ell_i\in ((C)\cdot (C)+(P_{s+1}))\cap (P_{1\ldots s}).$$

Let us choose a complement $C'$ to $Q\cap P_{s+1}$ in $C$. Then we have
$$(C)\cdot (C)+(P_{s+1})=(C')\cdot (C')+(P_{s+1}).$$
Since the subspaces $P_{1\ldots s}$, $C'$ and $P_{s+1}$ are linearly independent, we have
$$((C')\cdot (C')+(P_{s+1}))\cap (P_{1\ldots s})=(P_{s+1})\cdot (P_{1\ldots s}).$$
Hence,
$$\sum_i m_i \ell_i \in (P_{1\ldots s})\cdot  (P_{s+1}).$$
It follows that 
$$\dim I_2\cap (P_{s+1})\le \dim ((P_{1\ldots s})\cdot  (P_{s+1}))_2 + d \le p\cdot r+(r-p)^2\le r^2. 
$$
%This shows that $d(s+1,r)\leq d(s,r-p)+rp$.
%Now for $d(3,r)$, there is an $0\leq p\leq r$ such that
%\begin{align*}
%    d(3,r) \leq d(2,r-p)+rp \leq &\max_{0\leq p\leq r} (d(2,r-p)+rp) \\
%                                &\max_{0\leq p\leq r}  ((r-p)^2+rp)\\
%                                & = r^2
%\end{align*}
%as desired.
\end{proof}

\medskip

\begin{proof}[Proof of Theorem B]
Set $Q:=\cap_{s\in S} P_s$, $\ov{P}_s:=P_s/Q$. Let us consider the quotient $\ov{S}:=S/(Q)$, which is still a polynomial ring, and let $\pi:S\to \ov{S}$ denote the natural projection.
Since $I=\pi^{-1}(\ov{I})$, where $\ov{I}=\cap_{s\in S}(\ov{P}_s)$, $I$ has the same number of quadratic generators as $\ov{I}$. This reduces us to the case $\cap_{s\in S} P_s=0$.
We can choose a finite subset $S_0\sub S$ such that $\cap_{s\in S_0} P_s=0$. Now, since $I\sub \cap_{s\in S_0} (P_s)$, from Theorem \ref{main-trivial-intersect}, we get $\dim I_2\le r^2$,
as required.
\end{proof}

\subsection{Example}

Here we consider a simple example of intersection of three linear ideals.

\begin{lemma}\label{example-lem}
Consider the ideal $I = (P_1)\cap (P_2)\cap (P_3)$, where $P_1$, $P_2$ and $P_3$ are subspaces of linear forms, such that
$\dim P_i=r$ and $P_i\cap P_j=0$ for $i\neq j$.
Let $k= \dim P_3\cap(P_1+P_2)$. Then 
$$\dim I_2={k \choose 2}.$$
\end{lemma}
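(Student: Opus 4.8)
The plan is to reduce the computation to a single intersection in degree two and then analyze it in a basis adapted to $U := P_3\cap(P_1+P_2)$, so that $k=\dim U$. By the base case $s=2$ of Theorem \ref{main-trivial-intersect} we have $(P_1)\cap(P_2)=(P_1)\cdot(P_2)$, hence in degree two $I_2=(P_1\cdot P_2)\cap(P_3\cdot S_1)$, where $P_1\cdot P_2\subset \operatorname{Sym}^2(P_1+P_2)$. The first thing I would record is that, using the decomposition $P_1+P_2=P_1\oplus P_2$ (valid since $P_1\cap P_2=0$), the two projections $a\colon U\to P_1$ and $b\colon U\to P_2$ are \emph{injective}: their kernels are $U\cap P_2$ and $U\cap P_1$, which vanish because $P_3\cap P_i=0$. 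Writing $u=a(u)+b(u)$, I may therefore fix a basis $u_1,\ldots,u_k$ of $U$ and complete $a(u_1),\ldots,a(u_k)$ to a basis $\alpha_1,\ldots,\alpha_r$ of $P_1$ and $b(u_1),\ldots,b(u_k)$ to a basis $\beta_1,\ldots,\beta_r$ of $P_2$, so that $u_i=\alpha_i+\beta_i$ for $i\le k$.

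For the lower bound I would exhibit explicit elements of $I_2$. For $u,u'\in U$ set $\phi(u,u'):=a(u)b(u')-a(u')b(u)\in P_1\cdot P_2$. A short rewriting using $b(u)=u-a(u)$ gives $\phi(u,u')=a(u)\cdot u'-a(u')\cdot u$, which lies in $P_3\cdot S_1$ since $u,u'\in P_3$; hence $\phi(u,u')\in I_2$. As $\phi$ is alternating, this defines a linear map $\wedge^2 U\to I_2$. Its injectivity is immediate: the $2k$ forms $\alpha_1,\ldots,\alpha_k,\beta_1,\ldots,\beta_k$ are linearly independent, so the monomials $\alpha_i\beta_j$ are independent and any relation among the $\phi(u_i,u_j)$ forces all coefficients to vanish. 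This already yields $\dim I_2\ge{k\choose 2}$.

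For the matching upper bound I would use that $q\in P_3\cdot S_1$ if and only if the image $\ov q$ of $q$ in $\operatorname{Sym}^2(S_1/P_3)$ vanishes. Writing a general element of $P_1\cdot P_2$ as $q=\sum_{i,j}c_{ij}\alpha_i\beta_j$, I would compute $\ov q$ by imposing the relations $\alpha_i=-\beta_i$ ($i\le k$) that $P_3$ induces, and splitting the indices into the ``small'' block $\{1,\ldots,k\}$ and the ``large'' block $\{k+1,\ldots,r\}$. Comparing coefficients of the resulting independent quadratic monomials forces $c_{ij}=0$ whenever $i$ or $j$ is large, and forces the restriction of $(c_{ij})$ to the small block to be antisymmetric with zero diagonal. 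The space of such matrices has dimension ${k\choose 2}$, matching the lower bound and giving $\dim I_2={k\choose 2}$; the surviving elements are exactly the $\phi(u_i,u_j)$. The one point needing care is the clean choice of adapted bases in the first step, which rests on the injectivity of $a$ and $b$; once that is in place the coefficient bookkeeping in $\operatorname{Sym}^2(S_1/P_3)$ is routine.
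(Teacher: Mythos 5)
Your proof is correct, but it takes a genuinely different route from the paper. The paper normalizes coordinates so that $P_1=\langle x_1,\dots,x_r\rangle$, $P_2=\langle y_1,\dots,y_r\rangle$, $P_3=\langle x_1+y_1,\dots,x_k+y_k,z_1,\dots,z_{r-k}\rangle$, asserts that $I_2$ equals the degree-two part of $(x_1,\dots,x_k)\cap(y_1,\dots,y_k)\cap(x_1+y_1,\dots,x_k+y_k)$, and then argues geometrically: the span of the $x_iy_j-x_jy_i$ cuts out the Segre image of $\mathbb{P}^1\times\mathbb{P}^{k-1}$ in $\mathbb{P}^{2k-1}$, the three subspaces are the fibers over three distinct points of $\mathbb{P}^1$, and a section of $\mathcal{O}_{\mathbb{P}^1}(2)$ vanishing at three points is zero, so any quadric through the three subspaces vanishes on the whole Segre variety. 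You instead stay entirely in linear algebra: you identify $I_2=(P_1\cdot P_2)\cap(P_3\cdot S_1)$ via the base case $(P_1)\cap(P_2)=(P_1)\cdot(P_2)$, build the adapted bases through the injective projections $a,b\colon U\to P_1,P_2$, exhibit the alternating family $\phi(u,u')=a(u)b(u')-a(u')b(u)=a(u)u'-a(u')u$ for the lower bound, and compare coefficients in $\operatorname{Sym}^2(S_1/P_3)$ for the upper bound. Your route is more elementary and self-contained: it proves, rather than asserts, the reduction to the $2k$ relevant variables (the vanishing of all $c_{ij}$ with a ``large'' index is exactly that step), and it manifestly works over an arbitrary field since no geometric points of projective space are needed. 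The paper's route buys a conceptual picture (quadrics through three rulings of a Segre variety) that explains where the answer ${k\choose 2}$ comes from and suggests generalizations to more subspaces or higher degree. The only detail you leave implicit is that the images of $\alpha_1,\dots,\alpha_r,\beta_{k+1},\dots,\beta_r$ remain linearly independent in $S_1/P_3$; this follows from $P_3\cap(P_1+P_2)=U$ by a dimension count and is fair to call routine.
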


\begin{proof}
Without loss of generality we can write
%\begin{align*}
$$      P_1=  \langle x_1, ... , x_r\rangle, \ \ P_2 = \langle y_1,... ,y_r\rangle, \ \ 
    P_3 = \langle x_1+y_1,... , x_k+y_k,z_1, ... ,z_{r-k}\rangle,
    $$
for some linearly independent elements $x_1,\ldots,x_r,y_1,\ldots,y_r.z_1,\ldots,z_{r-k}$.    
%\end{align*}
The quadratic homogeneous part of $I$ is 
$$ I_2  = \left( (x_1,... ,x_k)\cap (y_1,... ,y_k)\cap(x_1+y_1,... ,  x_k+y_k) \right)_2.$$
It is enough to prove that
\begin{claim}
$I_2 = \langle x_iy_j - x_jy_i | 1\leq i <j\leq k\rangle$.
\end{claim}
\begin{claimproof}
Let $X\sub \P^{2k-1}$ be the projective variety defined by the vanishing locus of all (2,2)-minors of 
$$\begin{pmatrix}
x_1 & x_2 & ... & x_k\\
y_1 & y_2 & ... & y_k
\end{pmatrix}$$
%after projectivising $X\subset \mathbb{A}^{2k}$, the map 
As is well known, $X$ is the image of the Segre embedding
$\mathbb{P}^1\times \mathbb{P}^{k-1} \xhookrightarrow{i} \mathbb{P}^{2k-1}$, and all the quadratic equations of $X$ are linear combinations of $(x_iy_j-x_jy_i)$. 
The projectivizations of our three spaces $(P_1),(P_2),(P_3)$ are the images of $[0:1]\times \mathbb{P}^{k-1}$, $[1:0]\times \mathbb{P}^{k-1}$ and $[1:-1]\times \mathbb{P}^{k-1}$, respectively. 
It now suffices to show that any quadric $q$ vanishing on these three subspaces, vanishes identically on $\mathbb{P}^1\times \mathbb{P}^{k-1}$.

Consider the restriction
$$i^*q \in H^0( i^*\mathcal{O}_{\mathbb{P}^{2k-1}}(2)) = H^0(\mathcal{O}_{\mathbb{P}^1\times \mathbb{P}^{k-1} }(2,2)) = H^0(\mathcal{O_{\mathbb{P}}}(2)) \otimes H^0(\mathcal{O}_{\mathbb{P}^{k-1}}(2)).$$
By assumption, $i^*q$ must vanish on $p_i\times \mathbb{P}^{k-1}$, $i=1,2,3$, where $p_1=[0:1]$, $p_2=[1:0]$, $p_3=[1:-1]$. This implies that
%all three subspaces, which means that 
%under maps
%$ eval_{P_i}\otimes id_{H^0(\mathbb{P}^{k-1},\mathcal{O}(2))} $, $q$ is sent to zero sections. But then
$$ i^*q\in \ker \left( (eval_{p_1}\oplus eval_{p_2}\oplus eval_{p_3}) \otimes id_{H^0(\mathbb{P}^{k-1},\mathcal{O}(2))} \right),$$
where $eval_{p_i}:H^0(\OO_{\P^1}(2))\to \k$ is the evaluation at the point $p_i$. Since $\cap_{i=1}^3\ker(eval_{p_i})=0$, we get that $i^*q=0$.
\end{claimproof}
\end{proof}

\section{Bounds for $c(r)$}

\subsection{Proof of Theorem A}

\begin{definition} Let us say that a polynomial $f\in S=\k[x_1,\ldots,x_n]$ depends on $\le m$ variables if there exists an $m$-dimensional subspace $V$ of linear forms such that
$f\in S(V)\sub S$.
\end{definition}

Note that if $f$ is a homogeneous polynomial, and $f\in S(V)$ for some subspace of linear forms $V$, then any subspace $P\in \PP_f$ is contained in $V$
(since $(P)\cap S(V)\sub (P\cap V)$), hence $L_f\sub V$. Thus, if $f$ depends on $\le m$ variables then $\dim L_f\le m$.

\begin{lemma}\label{qu-cubic-dec-lem}
Let $f$ be a cubic contained in the ideal $I=(P_1)\cap\ldots \cap (P_s)$. Assume that $\cap_{i=1}^s P_i=0$, and set $W=P_1+\ldots+P_s$.
Then $f$ depends on $\le \dim W+\dim I_2$ variables.
\end{lemma}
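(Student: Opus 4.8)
The plan is to reduce to the ``internal'' intersection $I'\sub S(W)$ and exploit the module structure of $I$ over $S(W)$. First I would invoke \cite[Lemma 3.2]{KP} (as in the proof of Theorem \ref{main-trivial-intersect}) to write $I=S\cdot I'$, where $I'\sub S(W)$ is the intersection of the ideals generated by $P_1,\ldots,P_s$ inside the polynomial ring $S(W)$. Extend a basis $w_1,\ldots,w_m$ of $W$ (so $m=\dim W$) to a basis $w_1,\ldots,w_m,u_1,\ldots,u_{n-m}$ of the space of all linear forms. Then $S$ is free over $S(W)$ on the monomials $u^\beta$ in the $u$-variables, giving $S=\bigoplus_\beta S(W)\,u^\beta$ and correspondingly $I=S\cdot I'=\bigoplus_\beta I'\,u^\beta$.

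Using this decomposition, write $f=\sum_\beta f_\beta u^\beta$ with $f_\beta\in S(W)$; then $f\in I$ is equivalent to $f_\beta\in I'$ for every $\beta$, and homogeneity forces $f_\beta\in I'_{3-|\beta|}$. The key observation is that $I'_1=I'\cap S(W)_1=\bigcap_{i=1}^s P_i=0$ by hypothesis (the degree-one part of the ideal generated by $P_i$ in $S(W)$ is exactly $P_i$), and likewise $I'_0=0$. Hence the terms with $|\beta|=2$ and $|\beta|=3$ vanish, and only $\beta=0$ and $|\beta|=1$ survive:
$$f=f_0+\sum_{j} u_j\, g_j, \qquad f_0\in I'_3,\ \ g_j\in I'_2.$$
In particular $f_0$ and all the $g_j$ lie in $S(W)$ and involve only the $m$ variables spanning $W$.

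It remains to control the $u$-variables. The same vanishing $I'_1=0$ shows $I_2=I'_2$, so $d:=\dim I_2=\dim I'_2$. Choosing a basis $q_1,\ldots,q_d$ of $I'_2$ and writing each $g_j=\sum_t a_{jt}q_t$, I would regroup
$$\sum_j u_j g_j=\sum_{t=1}^d v_t\, q_t,\qquad v_t:=\sum_j a_{jt}u_j,$$
so that $f=f_0+\sum_{t=1}^d v_t q_t$. Every term now lies in $S(V)$ for $V:=W+\lan v_1,\ldots,v_d\ran$, a subspace of linear forms with $\dim V\le \dim W+d=\dim W+\dim I_2$, which gives the claim. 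The only real obstacle is the bookkeeping: correctly identifying $I'_1=0$ (which both kills the unwanted terms of $f$ and identifies $\dim I_2$ with $\dim I'_2$), and then using a basis of $I'_2$ to absorb the auxiliary variables $u_j$ into at most $\dim I_2$ new linear forms.
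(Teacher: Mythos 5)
Your proposal is correct and follows essentially the same route as the paper: both invoke \cite[Lemma 3.2]{KP} to place the generators of $I$ inside $S(W)$, observe that $I$ has no linear generators (so $\dim I_2$ counts the quadratic generators), and write $f$ as a combination of cubic generators in $S(W)$ plus at most $\dim I_2$ linear forms times quadratic generators. Your explicit free-module decomposition $S=\bigoplus_\beta S(W)u^\beta$ is just a more detailed bookkeeping of the same argument.
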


\begin{proof}
We can write $f$ as
$$f=\sum_{i=1}^{N} \ell_i q_i+\sum_{j=1}^M c_jf_j,$$
where $N=\dim I_2$, $\ell_i$ are linear forms, $c_j$ are constants, and $(q_i)$ (resp., $(f_j)$) are quadratic (resp., cubic) generators of the ideal $I$.
By \cite[Lemma 3.2]{KP}, all these generators can be chosen in $S(W)\sub S$.
Hence, $f$ belongs to $S(W+\lan \ell_1,\ldots,\ell_N\ran)$, so it depends on $\le \dim W+N$ variables.
\end{proof} 

\medskip

\begin{proof}[Proof of Theorem A]
We use induction on $n$. Set $n(r)=r^2+\frac{(r+1)^2}{4}+r$.

Assume first that $\cap_{P\in \PP_f} P=0$. Then we can choose an irredundant collection $P_1,..,P_s\in \PP_f$ with trivial common intersection.
By \cite[Lemma 3.6]{KP}, for $W=P_1+\cdots P_s$ we have 
$$\dim W\leq \frac{(r+1)^2}{4}+r.$$

%Now for any cubic $f\in (P_1)\cap\cdots\cap (P_s)$, 
By Theorem \ref{main-trivial-intersect}, we have
$\dim ((P_1)\cap\cdots\cap (P_s))_2 \leq r^2$. Hence, by Lemma \ref{qu-cubic-dec-lem},
$f$ depends on at most $r^2+\dim W\le r^2+\frac{(r+1)^2}{4}+r=n(r)$ variables, and so $\dim L_f\le n(r)$.

Now assume that $\cap_{P\in \PP_f} P\neq 0$.
Then there is a nonzero $l\in \cap_i P_i $. So we get 
$$\dim L_f=1+\dim L_{\ov{f}},$$ 
where $\ov{f}=f \mod (l)$ is the reduced cubic in the ring $S/(l)\simeq \k[x_1,\ldots,x_{n-1}]$.
Thus, using the induction assumption, we get
\begin{align*}
    \dim L_f&\leq 1+n(r-1) \\
    &=1+(r-1)^2+r-1+\frac{r^2}{4}\\
    &=r+(r-1)^2+\frac{r^2}{4}\\
    &\leq r^2+r+\frac{(r+1)^2}{4} = n(r)
\end{align*}
as required.
\end{proof}

\subsection{Test polynomials $f_n$}

For $n\ge 2$, consider the cubics
%$$f_{n,k,l} = \sum_{|S|=k, S\subset \{1,...,n\}}\prod_{i\in S}x_i \cdot \prod_{1\leq j\leq l} t^j_S $$
%and we claim that $sl.rk(f_{n.k}) = n-k+1$. 
$$f_n = \sum_{1\le i<j\le n}x_ix_jy_{ij}$$
in the polynomial rink $\k[x_i, y_{jk} \ | 1\le i\le n, 1\le j<k\le n]$.

\begin{prop}\label{fn-prop}
One has $\rk(f_n)=n-1$, $\dim L_f=n+{n \choose 2}$.
\end{prop}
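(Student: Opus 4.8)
The plan is to prove the two equalities in turn, establishing $\rk(f_n)=n-1$ first, since knowing the exact slice rank is what allows us to assert that the explicit $(n-1)$-dimensional subspaces produced below actually lie in $\PP_{f_n}$.

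The upper bound $\rk(f_n)\le n-1$ and the subspaces needed for $L_{f_n}$ come from the same elementary observation. Each monomial $x_ix_jy_{ij}$ of $f_n$ has two distinct factors among $x_1,\dots,x_n$, so for any fixed $k$ at least one of them differs from $x_k$; hence $f_n$ lies in the ideal of $P_k:=\langle x_i : i\neq k\rangle$, a subspace of dimension $n-1$, giving $\rk(f_n)\le n-1$. The same check shows that for each pair $a<b$ the $(n-1)$-dimensional subspace $P_{ab}:=\langle y_{ab}\rangle+\langle x_k : k\neq a,b\rangle$ also contains $f_n$ in its ideal: every monomial $x_ix_jy_{ij}$ with $\{i,j\}\neq\{a,b\}$ meets $\{x_k:k\neq a,b\}$, while the one monomial with $\{i,j\}=\{a,b\}$ is divisible by $y_{ab}$. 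I will return to these families once the lower bound is in place.

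The heart of the matter is the lower bound $\rk(f_n)\ge n-1$, i.e. that $f_n$ vanishes on no linear subspace $L\subset\A^N$ of codimension $r\le n-2$, where $N=n+\binom{n}{2}$. I would take linear forms $\ell_1,\dots,\ell_r$ cutting out $L$ and split each as $\ell_k=a_k\cdot x+b_k\cdot y$; after a change of basis I can assume $\ell_1,\dots,\ell_m$ have linearly independent $y$-parts while $\ell_{m+1},\dots,\ell_r$ are pure in $x$, the latter cutting out a subspace $L^0_x$ of the $x$-space of dimension $d=n-(r-m)$. Using $\ell_1,\dots,\ell_m$ to solve for $m$ of the variables $y_{ij}$ and substituting into $f_n=\sum_{i<j}x_ix_jy_{ij}$, the vanishing of $f_n$ on $L$ forces the coefficient of each remaining free $y_{ij}$ to vanish on $L^0_x$; this says precisely that every product $x_ix_j|_{L^0_x}$ lies in the span of the $m$ products $x_{i_s}x_{j_s}|_{L^0_x}$ associated to the solved variables. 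The decisive input is then purely linear-algebraic: the restrictions $x_1|_{L^0_x},\dots,x_n|_{L^0_x}$ span $(L^0_x)^*$, so after relabelling $x_1|_{L^0_x},\dots,x_d|_{L^0_x}$ are a basis, and in the domain $\operatorname{Sym}^\bullet(L^0_x)^*$ the $d-1$ products $x_1x_2|_{L^0_x},\dots,x_1x_d|_{L^0_x}$ are linearly independent (from $x_1\cdot(\sum_j c_jx_j)=0$ one gets $\sum_j c_jx_j=0$). Hence $m\ge d-1$, whence $r=(n-d)+m\ge n-1$, contradicting $r\le n-2$.

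I expect the main obstacle to be the bookkeeping in this reduction — separating the pure-$x$ constraints from the $y$-involving ones, and tracking the solved versus free $y$-variables — rather than the clean independence statement at its core. Granting $\rk(f_n)=n-1$, all the subspaces $P_k$ and $P_{ab}$ lie in $\PP_{f_n}$, and their sum contains every $x_i$ (as $x_i\in P_k$ for any $k\neq i$) and every $y_{ab}$ (as $y_{ab}\in P_{ab}$), so it is the entire space of linear forms. Since $L_{f_n}$ is by construction a subspace of this $N$-dimensional space, we conclude $\dim L_{f_n}=N=n+\binom{n}{2}$.
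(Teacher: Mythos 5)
Your proof is correct, and its core --- the lower bound $\rk(f_n)\ge n-1$ --- goes by a genuinely different route from the paper's. The paper argues by induction on $n$: writing $f_n=\sum_k l_kq_k$, it notes that $V(l_1,\dots,l_{n-2},q_1,\dots,q_{n-2})\subset \Sing(f_n)$, shows that $\Sing(f_n)$ is the union of the linear space $\{x_1=\dots=x_n=0\}$ with components of codimension $\ge 2n-2$, and concludes by a codimension count that $P\cap\langle x_1,\dots,x_n\rangle\neq 0$; restricting to a suitable subspace of the hyperplane $l_1=0$ then produces $f_{n-1}$ and the inductive contradiction. Your argument is direct and non-inductive: after normalizing the cutting forms so that $\ell_{m+1},\dots,\ell_r$ are pure in $x$ and eliminating $m$ of the $y$-variables, the vanishing of the coefficient of each free $y_{ij}$ forces every product $x_ix_j|_{L^0_x}$ into an at most $m$-dimensional subspace of $\operatorname{Sym}^2(L^0_x)^*$, which visibly contains the $d-1$ independent products $x_1x_j|_{L^0_x}$; hence $m\ge d-1$ and $r=(n-d)+m\ge n-1$. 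This buys elementarity (no irreducible components or dimension theory, hence no implicit passage to the algebraic closure) at the cost of some bookkeeping, which you have in fact carried out correctly. One small point of hygiene: ``$f_n$ vanishes on $L$'' should be read throughout as membership of $f_n$ in the prime ideal $(\ell_1,\dots,\ell_r)$, i.e.\ vanishing of its image in the quotient polynomial ring; that is exactly what makes each coefficient of a free $y_{ij}$ vanish as a \emph{polynomial} on $L^0_x$ rather than merely as a function, so the argument is valid over any field. Your derivation of $\dim L_{f_n}=n+\binom{n}{2}$ from the subspaces $P_k$ and $P_{ab}$ supplies a step the paper leaves implicit (and the $P_k$ are genuinely needed to cover the case $n=2$, where the $P_{ab}$ alone do not span).
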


\begin{proof}
Since for any pair $i<j$ one has $f_n\in (y_{ij},(x_k \ |\ k\neq i,j))$, it suffices to prove $\rk(f_n)\geq n-1$.
We proceed by induction on $n$.
We have $f_2=x_1x_2y_{12}$, which has rank $1$.

Suppose now $n\geq 3$ and $\rk(f_{n-1}) = n-2$, and assume $\rk(f_n)\leq n-2$, i.e. there is an $(n-2)$-dimensional space of linear forms $P=\lan l_1,\ldots,l_{n-2}\ran$ such that $f_n\in (P)$.

First, are going to prove that $P\cap \lan x_1,...,x_n\ran \not=0$. 
By assumption, $f_n = l_1q_1+...+l_{n-2}q_{n-2}$ for some quadratic forms $q_i$. 
Hence, the closed subscheme $V(l_1,\ldots,l_{n-2},q_1,\ldots,q_{n-2})$ is contained in
the singular locus $\Sing(f_n)$ of the hypersurface $f_n=0$. This singular locus is given by the following equations:
%then the singular locus is cut out by
\begin{equation}\label{test-poly-singularlocus-n2d3}
    \Sing(f_n) = \left\{
    \begin{aligned}
    &\frac{\partial f_n}{\partial x_i} =\sum_{j\not=i}x_j y_{ij}= 0 \\
    &\frac{\partial f_n}{\partial y_{ij}} = x_i x_j  =  0
  \end{aligned}
  \right.
\end{equation}
where we use the convention $y_{ji}=y_{ij}$.
Clearly, we have $(x_1,...,x_n)\subset \Sing(f_n)$. We claim that this is the irreducible component of the lowest codimension and that all other components have codimension $\ge 2n-2$. 
Indeed, let $Z\sub \Sing(f_n)$ be an irreducible component such that $x_i$ is not identically zero $Z$, for some $i$.
Then by the second condition of \eqref{test-poly-singularlocus-n2d3}, we must have $x_j|_Z =0$ for all $j\neq i$, and by the first condition we get $y_{ij}|_Z=0$ for all $j\neq i$. 
Thus, $Z$ is contained in a linear subspace of codimension $2n-2$, which proves our claim. Thus, we get
\begin{equation*}
   \begin{split}
   \left(\begin{aligned}
    & l_1 = l_2 = \cdots = l_{n-2} = 0\\
        & q_1 = q_2 = \cdots = q_{n-2} = 0
   \end{aligned}\right)
  \end{split}
\subset Sing(f_n) = \mbox{ }
  \begin{split}
    (x_1,...,x_n)\mbox{ } \bigcup \mbox{ }\begin{split}
        \mbox{Some components }\\
        \mbox{ of codim}\geq 2n-2
    \end{split}
  \end{split}
\end{equation*}
Since the locus $V(l_1,\ldots,l_{n-2},q_1,\ldots,q_{n-2})$ has codimension $\leq 2n-4$, it follows that it must be contained in the component $(x_1 = \ldots =x_n = 0)$. 
If $P\cap\lan x_1,\ldots,x_n\ran=0$, then we would get that $V(l_1,\ldots,l_{n-2},q_1,\ldots,q_{n-2})$ is contained in the linear subspace $V(l_1,\ldots,l_{n-2},x_1,\ldots,x_n)$
of codimensions $2n-2$, which is impossible. Hence we deduce that $P\cap\lan x_1,\ldots,x_n\ran\neq 0$.

Reordering the variables, we can assume that $l_1=x_n-c_1x_1-...-c_{n-1}x_{n-1}\in P$. This implies that the restriction of $f_n$ to any linear subspace contained in $l_1=0$ has
slice rank $\le n-3$. But we have
$$ f_n |_{x_n = c_1x_1+...+c_{n-1}x_{n-1}, y_{1n}=...=y_{n-1,n} = 0}  = f_{n-1} $$
so we get a contradiction. Hence, $\rk(f_n)\ge n-1$, as required.
\end{proof}

\section{Cubics of rank $3$}

\subsection{Some lemmas}

\begin{lemma}\label{deg3-trivial-intersect-estimate} Suppose there exist
$P_1,P_2,P_3\in \mathcal{P}_f$ with pairwise trivial intersections. Then $f$ depends on $\le\frac{r(r+3)}{2}$ variables.
\end{lemma}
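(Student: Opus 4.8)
The plan is to combine the two lemmas already in hand with a one-variable optimization. Since $P_1,P_2,P_3\in\PP_f$, each has dimension $r=\rk(f)$, and the pairwise triviality $P_i\cap P_j=0$ in particular forces $P_1\cap P_2\cap P_3=0$. Hence $f$ lies in the ideal $I=(P_1)\cap(P_2)\cap(P_3)$ whose defining spaces have trivial common intersection, and Lemma \ref{qu-cubic-dec-lem} applies directly: with $W=P_1+P_2+P_3$, the cubic $f$ depends on at most $\dim W+\dim I_2$ variables. The whole argument thus reduces to estimating these two numbers in terms of the single overlap parameter $k:=\dim\bigl(P_3\cap(P_1+P_2)\bigr)$.

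For $\dim W$, I would use that $P_1\cap P_2=0$ gives $\dim(P_1+P_2)=2r$, so inclusion--exclusion yields
$$\dim W=2r+r-k=3r-k.$$
For $\dim I_2$, I observe that the hypotheses of Lemma \ref{example-lem} hold verbatim---the $P_i$ all have dimension $r$ and meet pairwise trivially---so that lemma gives $\dim I_2={k\choose 2}$ for the same $k$. Combining the two estimates, $f$ depends on at most $g(k):=3r-k+{k\choose 2}$ variables, and it remains to bound $g(k)$ over the admissible range $0\le k\le r$.

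The last step is this optimization. A direct computation gives
$$g(r)-g(k)=\frac{(r-k)(r+k-3)}{2},$$
which is $\ge 0$ for all $0\le k\le r$ once $r\ge 3$. Hence $g$ attains its maximum at the endpoint $k=r$, where $g(r)=\frac{r(r+3)}{2}$, and the claimed bound follows.

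I do not anticipate a genuine obstacle, since both inputs are already proved; the only point requiring care is the convexity check, i.e. confirming that the upward parabola $g$ is maximized at $k=r$ rather than at $k=0$. This is exactly where the rank hypothesis $r\ge 3$ of this section enters: for smaller $r$ the factor $r+k-3$ can turn negative and the endpoint $k=0$ would dominate instead.
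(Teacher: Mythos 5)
Your proof is correct and follows essentially the same route as the paper: apply Lemma \ref{example-lem} to get $\dim I_2=\binom{k}{2}$, Lemma \ref{qu-cubic-dec-lem} to get the bound $\dim W+\dim I_2=3r-k+\binom{k}{2}$, and then maximize over $k$. Your endpoint analysis is in fact slightly more careful than the paper's one-line inequality, since you make explicit that the maximum sits at $k=r$ only when $r\ge 3$ (the only case in which the lemma is used).
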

\begin{proof}
Let $k = \dim P\cap(P_1+P_2)$, then by Lemma \ref{example-lem}, for the ideal $I=(P_1)\cap (P_2)\cap (P_3)$ we have $\dim I_2={k\choose 2}$.
By Lemma \ref{qu-cubic-dec-lem}, $f$ depends on $\le \dim W+\dim I_2$ variables, where $W=P_1+P_2+P_3$ has dimension $3r-k$.
It remains to observe that
$$\dim W+\dim I_2=3r-k + {{k}\choose{2}} = 3r + \frac{k(k-3)}{2}\leq 3r+\frac{r(r-3)}{2} = \frac{r(r+3)}{2}.$$
\end{proof}

\begin{lemma}\label{special-qu-gen-lem}
Let $f$ be a cubic contained in the ideal $I=(P_1)\cap\ldots\cap (P_s)$, where $\cap_i P_i=0$. Set $W=\sum_i P_i$.
Assume we have $I_2\sub (P)$ for some $P\in \PP_f$. Then $P\sub W$.
\end{lemma}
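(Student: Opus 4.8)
The plan is to show directly that $f\in (P\cap W)$; once this is established, minimality of the slice rank finishes the argument. Indeed, set $P_0:=P\cap W$, so $P_0\sub P$ and $\dim P_0\le \dim P=r=\rk(f)$. If $f\in (P_0)$, then $\rk(f)\le \dim P_0\le r$, forcing $\dim P_0=\dim P$ and hence $P_0=P$, i.e.\ $P\sub W$. Thus everything reduces to rewriting $f$ as a combination of ideal elements all lying in $(P_0)$.

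To produce such an expression I would start from the decomposition of Lemma \ref{qu-cubic-dec-lem}: using \cite[Lemma 3.2]{KP} to choose all generators of $I$ inside $S(W)$, write $f=\sum_{i=1}^N \ell_i q_i+g$, where the $q_i\in S(W)$ are quadratic generators of $I$, $g\in S(W)$ is a combination of cubic generators, and the $\ell_i$ are linear forms. Since $\cap_i P_i=0$, the ideal $I$ has no linear part, so $I_2=\lan q_1,\ldots,q_N\ran$. The hypothesis $I_2\sub (P)$ then gives $q_i\in (P)$ for every $i$, whence $\sum_i \ell_i q_i\in (P)$; combined with $f\in (P)$ (which holds since $P\in \PP_f$) this yields $g=f-\sum_i \ell_i q_i\in (P)$ as well. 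Hence both $g$ and all the $q_i$ lie in $(P)\cap S(W)$.

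The technical crux — and the step I expect to be the main obstacle — is the elimination identity
$$(P)\cap S(W)=(P\cap W)\cdot S(W).$$
I would prove it by identifying $(P)\cap S(W)$ with the kernel of the composite $S(W)\hra S\to S/(P)$. Writing $S/(P)\cong S(S_1/P)$, this composite is induced by the linear map $W\to S_1/P$ obtained by restricting the projection $S_1\to S_1/P$, whose kernel is exactly $W\cap P=P\cap W$. Since passing from a linear map of spaces of linear forms to the induced map of polynomial rings sends an injection to an injection and a surjection to the quotient by the ideal generated by its kernel, the kernel of the composite is precisely $(P\cap W)\,S(W)$. (The only point requiring care is that $W$ is a subspace of linear forms, so that $S(W)$ really is a polynomial subring; this is part of the standing hypotheses.)

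Granting the identity, the $q_i$ and $g$ all lie in $(P\cap W)\,S(W)\sub (P_0)$, so the decomposition $f=\sum_i \ell_i q_i+g$ exhibits $f$ as an element of $(P_0)$. The minimality argument of the first paragraph then gives $P=P_0\sub W$, completing the proof.
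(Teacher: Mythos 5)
Your proposal is correct and follows essentially the same route as the paper: decompose $f$ via the generators of $I$ chosen in $S(W)$, observe that the quadratic generators and the cubic remainder all land in $(P)\cap S(W)\sub (P\cap W)$, and conclude $P\cap W=P$ by minimality of $\dim P=\rk(f)$. The only difference is that you spell out a proof of the containment $(P)\cap S(W)\sub (P\cap W)$ (indeed as an equality), which the paper takes for granted, having noted it parenthetically earlier.
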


\begin{proof}
Let us write as in the proof of Lemma \ref{qu-cubic-dec-lem},
$$f=\sum_{i=1}^{N} \ell_i q_i+\sum_{j=1}^M c_jf_j,$$
where $(q_i)$ (resp., $(f_j)$) are quadratic (resp., cubic) generators of the ideal $I$.
Note that we have $q_i,f_j\in S(W)\sub S$. Also, by assumption $q_i\in (P)$. It follows that
$q_i\in (P)\cap S(W)\sub (P\cap W)$ and
$$\sum_j c_j f_j\in (P)\cap S(W)\sub (P\cap W).$$
Hence, $f\in (P\cap W)$. Since $\dim P\cap W\le \dim P$, the condition $P\in \PP_f$ implies that $P\cap W=P$, i.e., $P\sub W$.
\end{proof}

\begin{lemma}\label{deg3-commonintersec}
Suppose we have a collection of $3$-dimensional subspaces $(P_i)_{i\in I}$ such that any two intersect in a $2$-dimensional space.
Then either $\dim \cap_{i\in I} P_i \ge 2$, or $\dim \sum_{i\in I}P_i\leq 4$.
\end{lemma}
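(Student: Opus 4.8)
The plan is to fix a base pair of planes and show that every other member of the family either contains their common line or lies in their common $4$-dimensional span; the dichotomy of the lemma then follows once we check that these two behaviours cannot coexist. If $|I|\le 1$ the intersection is all of $P_1$ and $\dim\cap_{i}P_i\ge 2$ holds trivially, so I would assume $|I|\ge 2$ and fix two members $P_1,P_2$. Set $L:=P_1\cap P_2$ and $U:=P_1+P_2$; by hypothesis $\dim L=2$, and hence $\dim U=3+3-2=4$.

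The first step is a local claim: for every $i$, either $P_i\subset U$ or $L\subset P_i$. To see this, suppose $P_i\not\subset U$. Since $\dim P_i=3$ and $\dim U=4$, the strict inclusion $U\subsetneq P_i+U$ forces $\dim(P_i+U)\ge 5$, so $\dim(P_i\cap U)\le 3+4-5=2$; on the other hand $P_i\cap U$ contains the $2$-dimensional space $P_i\cap P_1$, so in fact $\dim(P_i\cap U)=2$ and $P_i\cap P_1=P_i\cap U=P_i\cap P_2$. Calling this common plane $M$, we get $M\subset P_1\cap P_2=L$, and equality of dimensions gives $M=L$; since $M\subset P_i$ this yields $L\subset P_i$, proving the claim.

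Finally I would globalize. If $L\subset P_i$ for all $i$, then $L\subset\cap_{i\in I}P_i$ and $\dim\cap_{i\in I}P_i\ge 2$, which is the first alternative. Otherwise there is a plane $P_a$ with $L\not\subset P_a$, so $P_a\subset U$ by the claim. I then want every $P_b$ to lie in $U$ as well: if some $P_b\not\subset U$, the claim gives $L\subset P_b$, and the dimension count above gives $P_b\cap U=L$, whence $P_a\cap P_b=P_a\cap(P_b\cap U)=P_a\cap L$ has dimension $\le 1$ because $L\not\subset P_a$ — contradicting the hypothesis $\dim(P_a\cap P_b)=2$. Hence all $P_i\subset U$ and $\dim\sum_{i\in I}P_i\le\dim U=4$, the second alternative. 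The one genuinely delicate point is this last step, namely ruling out a \emph{mixed} configuration in which some planes contain $L$ while others merely sit inside $U$; the clash $\dim(P_a\cap P_b)\le 1$ computed above is exactly what makes such a configuration impossible, and I expect it to be the crux of the argument.
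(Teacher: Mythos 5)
Your proof is correct and follows essentially the same route as the paper's: fix $P_1,P_2$, set $L=P_1\cap P_2$ and $U=P_1+P_2$, show each $P_i$ either contains $L$ or lies in $U$, and then rule out the mixed configuration by intersecting a plane of each type. If anything, your write-up is more complete than the paper's rather terse argument, which leaves your ``local claim'' (that $P_i\not\subset U$ forces $L\subset P_i$) implicit.
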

\begin{proof}
Suppose we could find $P,Q$ in this set such that for a pair $P_1,P_2$, $P_1\cap P_2 \not\subset P\subset P_1+P_2$ and $P_1\cap P_2  \subset Q\not\subset P_1+P_2$. Then $P\cap Q \subset P_1\cap P_2$ and both of them as dimension 2, so they are equal. But this will imply $P\supset P_1\cap P_2$, a contradiction.
So either all spaces intersect in the same 2-dimensional subspace, so any space is contained in $P_1+P_2$, which is 4-dimensional.
\end{proof}

\subsection{Bounds for $c(3)$}

\begin{prop}\label{c3-prop} One has $c(3)\leq 12$. In other words, for any cubic $f$ of slice rank $3$, we have $\dim L_f\le 12$.
\end{prop}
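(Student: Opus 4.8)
The plan is to mimic the inductive structure of the proof of Theorem~A, running an induction on the number of variables $n$ and splitting on whether $\bigcap_{P\in\PP_f}P$ is zero. If this common intersection is nonzero, I would pick $0\neq \ell\in\bigcap_{P\in\PP_f}P$; then $\dim L_f=1+\dim L_{\ov f}$ for the reduced cubic $\ov f=f\bmod(\ell)$, and since every $P\in\PP_f$ contains $\ell$, the quotient $P/\lan \ell\ran$ is $2$-dimensional and contains $\ov f$, so $\ov f$ has slice rank $\le 2$. Hence $\dim L_f\le 1+c(2)=7$ using the known value $c(2)=6$. So from the outset I may assume $\bigcap_{P\in\PP_f}P=0$.

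In the main case I would organize the argument around the possible dimensions of the pairwise intersections $P\cap Q$ for $P,Q\in\PP_f$ (each lying in $\{0,1,2\}$). If some triple $P_1,P_2,P_3\in\PP_f$ has pairwise trivial intersections, then Lemma~\ref{deg3-trivial-intersect-estimate} bounds the number of variables $f$ depends on by $\tfrac{r(r+3)}{2}=9$, so $\dim L_f\le 9$. If instead every pair meets in a $2$-dimensional space, then Lemma~\ref{deg3-commonintersec} applies: since $\bigcap_{P\in\PP_f}P=0$ rules out a common $2$-dimensional intersection, we must have $\dim\sum_{P\in\PP_f}P\le 4$, i.e.\ $\dim L_f\le 4$. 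Both of these extreme configurations are comfortably below $12$.

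The remaining, and genuinely delicate, situation is when $\PP_f$ contains a disjoint pair $P_1\cap P_2=0$ that cannot be completed to a pairwise-disjoint triple, possibly together with a mixture of $1$- and $2$-dimensional pairwise intersections. Here the naive estimate $\dim L_f\le \dim W+\dim I_2$ of Lemma~\ref{qu-cubic-dec-lem}, applied to the collection $\{P_1,P_2\}$, only gives $6+9=15$, since a disjoint pair forces $\dim I_2=r^2=9$; the test cubic $f_n$ already shows this estimate is genuinely loose. My plan is therefore to refine the collection: starting from an irredundant collection with trivial common intersection (so that $\dim W\le \tfrac{(r+1)^2}{4}+r=7$ by \cite[Lemma~3.6]{KP}), I would repeatedly adjoin any $Q\in\PP_f$ with $I_2\not\subseteq(Q)$, which strictly drops $\dim I_2$, while Lemma~\ref{special-qu-gen-lem} guarantees that every $Q$ with $I_2\subseteq(Q)$ already satisfies $Q\subseteq W$. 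The goal is to reach a collection for which $I_2\subseteq(Q)$ for \emph{all} $Q\in\PP_f$, so that $L_f\subseteq W$, while keeping the growth of $\dim W$ under control.

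The main obstacle is exactly this bookkeeping: each adjoined subspace lowers $\dim I_2$ but may raise $\dim W$, and one must show the trade-off never pushes the final $\dim W$ (equivalently $\dim W+\dim I_2$) above $12$. I expect this to require a direct structural analysis of rank-$3$ cubics lying in a product ideal $(P_1)(P_2)$ with $P_1\cap P_2=0$, describing which further $3$-dimensional subspaces $Q$ can satisfy $f\in(Q)$ and bounding the span of the ``new'' ones by $\le 6$, together with the intersection-dimension computations of Lemma~\ref{example-lem} and Lemma~\ref{deg3-commonintersec} to dispose of the configurations with $1$-dimensional overlaps. This case is precisely where the sharper constant $12$, as opposed to the generic bound $16$ coming from Theorem~A, has to be extracted.
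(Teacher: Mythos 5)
Your reductions of the easy configurations are correct and match the paper's: the nonzero common intersection case drops to $c(2)=6$ (the paper folds this into its Cases 2 and 3 rather than doing it up front, but your version works), a pairwise-disjoint triple gives $\le 9$ via Lemma~\ref{deg3-trivial-intersect-estimate}, and the all-$2$-dimensional-intersections case gives $\le 4$ via Lemma~\ref{deg3-commonintersec}. But the proof stops exactly where the theorem lives. Your ``remaining case'' is announced as a plan (``I would repeatedly adjoin\dots'', ``I expect this to require a direct structural analysis\dots'') and never executed, and it is precisely in that case that the constant $12$ has to be produced. The paper's argument there is not a general refinement procedure but a concrete enumeration: fix a disjoint pair $P_1,P_2$ and a third $P\in\PP_f$ with $P\not\subset P_1+P_2$, classify $P$ by the dimensions of $P\cap P_1$, $P\cap P_2$, $P\cap(P_1+P_2)$, and in each configuration count the quadratic generators of $(P_1)\cap(P_2)\cap(P)$ explicitly ($3$, $5$, or $6$), feeding the counts into Lemma~\ref{qu-cubic-dec-lem}. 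The worst configuration ($\dim P\cap P_1=2$, $P\cap P_2=0$, giving $6$ generators and only the bound $13$) then requires bringing in a fourth space $Q$ and invoking Lemma~\ref{special-qu-gen-lem} to kill one sub-subcase outright. None of these computations, nor the identification of which configuration is the bottleneck, appears in your proposal, so the bound $12$ is not established.

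There is also a gap in your case enumeration itself: after excluding a nonzero common intersection, a pairwise-disjoint triple, and the all-$2$-dimensional case, the leftover configurations are \emph{two}: (i) a disjoint pair exists but no disjoint triple, and (ii) no disjoint pair exists but some pair meets in a line. You only name (i). Configuration (ii) is the paper's Case 2, and it needs its own argument: either one finds a third space positioned so that the triple intersection has $6$ quadratic generators inside a $6$-dimensional $W$ (giving $12$), or every $P\in\PP_f$ either lies in $P_1+P_2$ or contains the line $P_1\cap P_2$, in which case one reduces modulo that line and uses $c(2)=6$ to get $\dim L_f\le 5+6=11$. Since this configuration is consistent with $\bigcap_{P\in\PP_f}P=0$, it is not covered by your opening reduction and cannot be skipped.
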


\begin{proof}  
Clearly we can assume that $\PP_f$ contains at least $4$ elements.
We distinguish several cases. 

\medskip

\noindent 
{\bf Case 1}. Assume there exist $P_1,P_2\in \mathcal{P}_f$ such that $ P_1\cap P_2 = 0$, say, $P_1 = \langle x_1,x_2,x_3\rangle, P_2 = \langle x_4,x_5,x_6\rangle $. 
We can also assume that there exists an element $P\in \mathcal{P}_f$ such that
%Since we are considering spaces in $\mathcal{P}_f$ that enlarges the total dimension, if suffices to consider  
$P\not\subset P_1+P_2$ (otherwise we have $L_f=P_1+P_2$).
There are several possibilities:

\quad 1.a. If $P\cap P_1 = P\cap P_2 = 0$ then by Lemma \ref{deg3-trivial-intersect-estimate}, $f$ depends on $\le 9$ variables, so we are done.
%wlog assume $P = \langle x_7,x_8,x_9\rangle$, then $ (P_1)\cap(P_2)\cap (P)$ is generated by cubics, so then $f$ depends only on variables in $P_1+P_2+P$, and thus $\dim \sum_{P\in \mathcal{P}_f} P\leq 9$.

\quad 1.b. If $\dim P\cap P_1 = 1$ and  $\dim P\cap (P_1+P_2) = 1 $, then we can assume that $P = \langle x_1,x_7,x_8\rangle$. In this case the ideal
$I:=(P_1)\cap(P_2)\cap (P)$ has 3 quadratic generators, namely, $x_1x_4,x_1x_5,x_1x_6$. Hence, by Lemma \ref{qu-cubic-dec-lem}, $f$ depends on $\le 8+3=11$ variables. 

\quad 1.c. If  $\dim P\cap P_1 = 1$, $\dim P\cap (P_1+P_2) = 2$, and $P\cap P_2=0$, then we can assume that $P = \langle x_1,x_2+x_4,x_7\rangle$. In this case
$I$ has 3 quadratic generators, hence by Lemma \ref{qu-cubic-dec-lem}, $f$ depends on $\leq 7+3 = 10$ variables.

\quad 1.d. If  $\dim P\cap P_1 = 1$, $\dim P\cap P_2 = 1$, then we can assume that $P=\lan x_1,x_4,x_7\ran$. In this case $I$ has $5$ quadratic generators
$x_1x_4, x_1x_5, x_1x_6, x_2x_4, x_3x_4$, hence by Lemma \ref{qu-cubic-dec-lem}, $f$ depends on $\leq 7+5 = 12$ variables.

\quad 1.e. If $\dim P\cap P_1=2$ and $P\cap P_2 = 0$, then we can assume that $P = \langle x_1,x_2,x_7\rangle$, then there will be 6 quadratic generators in $I$:
$$x_1x_4, x_1x_5, x_1x_6, x_2x_4, x_2x_5, x_2x_6,$$
so Lemma \ref{qu-cubic-dec-lem} would only give that $f$ depends on $\leq 7+6 = 13$ variables.
Let us consider a 4th space $Q\in \mathcal{P}_f$ such that $Q\not\subset P_1+P_2+P$. Using the previous cases with $Q$ instead of $P$,
we see that it is enough to consider one of the two cases: 

\quad 1.e.(i). $ \dim Q\cap P_2=2, Q\cap P_1 = 0$. In this case we can assume $Q = \langle x_4,x_5,x_8\rangle$. Then the intersection $I\cap (Q)$ has $4$ quadratic generators, 
so by Lemma \ref{qu-cubic-dec-lem}, $f$ depends on $\le 8+4 = 12$ variables. 

\quad 1.e.(ii). $ \dim Q\cap P_1=2, Q\cap P_2 = 0$. In this case $Q\cap P_1\cap P$ is either $1$-dimensional or $2$-dimensional. In the former case we can assume
that $Q=\lan x_1,x_3,x_8\ran$, while in the latter case we can assume that $Q=\lan x_1,x_2,x_8\ran$. In the case $Q=\lan x_1,x_3,x_8\ran$, the intersection $I\cap (Q)$ has $3$
quadratic generators, so $f$ depends on $\le 8+3=11$ variables. Finally, in the case $Q=\lan x_1,x_2,x_8\ran$ we note that all quadratic generators of $I$ are contained in 
$\lan x_1,x_2\ran\sub Q$.
Hence, by Lemma \ref{special-qu-gen-lem}, we get $Q\sub P_1+P_2+P_3$, which is a contradiction.

%Q\not\subset P_1+P_2+P $$

\medskip

\noindent
{\bf Case 2}. Now we assume that any two spaces in $\PP_f$ have nontrivial intersection, and there exist $P_1,P_2\in \mathcal{P}_f$ such that $\dim P_1\cap P_2 = 1$. 
Take another $P\in \PP_f$ such that $P\not\subset P_1+P_2$.
Assume first that $\dim P\cap P_1 = \dim P\cap P_2 =1$, $\dim P\cap (P_1+P_2) = 2$ and $P_1\cap P_2\not\subset P$. Then we can assume that
$P_1 = \langle x_1,x_2,x_3\rangle$, $P_2 = \langle x_1,x_4,x_5\rangle$, $P = \langle x_2,x_4,x_6\rangle$. Hence,
the ideal $(P_1)\cap(P_2)\cap (P_3)$ has 6 quadratic generators, and so $f$ depends $\le 6+6 = 12$ variables.  

If there is no such $P$ then we get that every $P\in \PP_f$ is either contained in $P_1+P_2$ or contains the line $L=P_1\cap P_2$. Let us consider the quotient polynomial ring 
$\ov{S}=S/(L)$ and the reduced cubic $\ov{f}$ which has slice rank $2$. Then we see that $L_f$ is contained in the sum of $P_1+P_2$ with the preimage of $L_{\ov{f}}$.
Note that by \cite[Thm.\ 3.7]{KP}, one has $\dim L_{\ov{f}}\le 6$. Hence, we deduce that $\dim L_f\le \dim(P_1+P_2)+\dim(L_{\ov{f}})\le 5+6=11$.

\medskip

\noindent
{\bf Case 3}. Now assume that any two spaces in $\mathcal{P}_f$ intersect in a 2-dimensional space. Then by Lemma \ref{deg3-commonintersec}, it is enough to consider the case when they have common intersection in a subspace $Q$ of dimension 2. Then the reduced cubic $\ov{f}$ in the quotient $S/(Q)$ has rank $1$, hence, $\dim L_{\ov{f}}\le 3$. This implies that
$\dim L_f\le 2+\dim L_{\ov{f}}\le 5$.
\end{proof}

\begin{theorem}\label{rk3-thm} One has $10\le c(3)\le 12$.
\end{theorem}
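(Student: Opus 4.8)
The final statement to prove is Theorem \ref{rk3-thm}: $10 \le c(3) \le 12$.

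The plan is to combine two independent bounds. The upper bound $c(3)\le 12$ is exactly the content of Proposition \ref{c3-prop}, which has already been established through the careful case analysis above; so for this direction I would simply invoke that result. Thus the entire remaining task is the lower bound $c(3)\ge 10$, which amounts to exhibiting a single cubic $f$ of slice rank $3$ with $\dim L_f\ge 10$.

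For the lower bound I would first try the test polynomials $f_n$ from Proposition \ref{fn-prop}, since those are the examples already used to produce the general lower bound $c(r)\ge \binom{r+1}{2}+r$. By Proposition \ref{fn-prop}, $f_n$ has slice rank $n-1$ and $\dim L_{f_n}=n+\binom{n}{2}$. To get slice rank $3$ I would take $n=4$, giving $f_4=\sum_{1\le i<j\le 4}x_ix_jy_{ij}$, which has $\rk(f_4)=3$ and
$$\dim L_{f_4}=4+\binom{4}{2}=4+6=10.$$
This already yields $c(3)\ge 10$. So the cleanest approach is: first state $c(3)\le 12$ by Proposition \ref{c3-prop}, then observe that the general lower bound $\binom{r+1}{2}+r$ specialized at $r=3$ gives $\binom{4}{2}+3=6+3=9$, which is \emph{not} quite enough, so I must instead appeal directly to the sharper value $\dim L_{f_4}=10$ coming from Proposition \ref{fn-prop} with $n=4$.

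The main subtlety — and the place where I would be careful — is that the crude formula $c(r)\ge\binom{r+1}{2}+r$ quoted in the introduction gives only $9$ at $r=3$, one short of the claimed $10$. The resolution is that this crude formula is obtained from $f_{r+1}$ by the bound $n+\binom{n}{2}\ge\binom{r+1}{2}+r$ with $n=r+1$, but the \emph{exact} value $\dim L_{f_{r+1}}=n+\binom{n}{2}$ with $n=r+1$ equals $(r+1)+\binom{r+1}{2}$, which at $r=3$ is $4+6=10$. Hence the lower bound for $c(3)$ should be read off from the exact statement of Proposition \ref{fn-prop} rather than from the simplified inequality. Once this is recognized, the proof is immediate: combine $c(3)\ge\dim L_{f_4}=10$ with Proposition \ref{c3-prop}'s $c(3)\le 12$ to conclude $10\le c(3)\le 12$.
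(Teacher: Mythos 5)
Your proposal is correct and matches the paper's own proof exactly: the upper bound is quoted from Proposition \ref{c3-prop}, and the lower bound comes from $f_4$ via Proposition \ref{fn-prop}, which gives $\rk(f_4)=3$ and $\dim L_{f_4}=4+\binom{4}{2}=10$. Your side remark about reading off the exact value $n+\binom{n}{2}$ rather than the weaker formula quoted in the introduction is a sensible precaution, but the substance of the argument is identical to the paper's.
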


\begin{proof}
The inequality $c(3)\le 12$ was proved in Proposition \ref{c3-prop}. On the other hand, by Proposition \ref{fn-prop}, 
the cubic $f_4=\sum_{1\le i<j\le 4} x_ix_j y_{ij}$ has rank $3$ and has $\dim L_f=10$.
\end{proof}


\begin{thebibliography}{9}
\bibitem{KP} D.~Kazhdan, A.~Polishchuk, {\it Linear subspaces in hypersurfaces over perfect fields}, arXiv:2107.08080.
\end{thebibliography}
\end{document}